\documentclass[12pt]{article}
\usepackage{amsmath,amssymb,amsfonts,amsthm,graphicx,float,fancyhdr}
\usepackage[a4paper,left=3cm,right=3cm,top=3cm,bottom=3cm]{geometry}
\usepackage{hyperref}
\usepackage[pagewise]{lineno}
\newtheorem{theorem}{Theorem}[section]
\newtheorem{proposition}[theorem]{Proposition}
\newtheorem{corollary}[theorem]{Corollary}
\newtheorem{lemma}[theorem]{Lemma}
\newtheorem{remark}[theorem]{Remark}

\numberwithin{equation}{section}



\title{Some asymptotic results for the continued fraction expansions with odd partial quotients}
\author{
    Gabriela Ileana Sebe\footnote{e-mail: igsebe@yahoo.com.} \\
    \emph{\small Politehnica University of Bucharest, Faculty of Applied Sciences},\\
    \emph{\small Splaiul Independentei 313, 060042, Bucharest, Romania} and \\
    \emph{\small Institute of Mathematical Statistics and Applied Mathematics}, \\
     \emph{\small Calea 13 Sept. 13, 050711 Bucharest, Romania}
    and\\
    Dan Lascu\footnote{e-mail: lascudan@gmail.com, corresponding author}\nonumber \\
    \emph{\small Mircea cel Batran Naval Academy, 1 Fulgerului, 900218 Constanta,
    Romania}
    }
\sloppy
\begin{document}
\maketitle
\pagestyle{fancy}
\lhead{G.I. Sebe and D. Lascu}
\rhead{Some asymptotic results for OCF}
\begin{abstract}We present and develop different approaches to study the asymptotic behavior of the distribution functions in the odd continued fractions case.
Firstly, by considering the transition operator of the Markov chain associated with these expansions on a certain Banach space of complex-valued functions of bounded variation we make a brief survey of the solution in the Gauss-Kuzmin-type problem.
Secondly, we use the method of Sz\"usz to obtain a similar asymptotic result and to give a good estimate of the convergence rate involved.

\textbf{Keywords}: {odd continued fractions, Gauss-Kuzmin-type problem}
\end{abstract}

\section{Introduction}
\label{Sec:1}

We define OCF$(x)$ the continued fraction expansions with odd partial quotients of $x \in [0, 1]$ as follows.
Let us partition the unit interval $[0, 1]$ into intervals
$\left( \frac{1}{2k}, \frac{1}{2k-1} \right]$, for $k=1, 2, 3, \ldots$, and
$\left( \frac{1}{2k-1}, \frac{1}{2k-2} \right]$, for $k=2, 3, 4, \ldots$, and
consider the transformation $T:[0, 1] \to [0, 1]$ defined by $T(0)= 0$ and
\begin{equation}
T(x) :=
\left\{
\begin{array}{ll}
{\displaystyle \frac{1}{x}- \left\lfloor\frac{1}{x}\right\rfloor},& { x \in  \displaystyle \bigcup_{k \geq 1} \left( \frac{1}{2k}, \frac{1}{2k-1} \right]}\\
{\displaystyle 1 - \left( \frac{1}{x}- \left\lfloor\frac{1}{x}\right\rfloor \right)},& { x \in \displaystyle \bigcup_{k \geq 2} \left( \frac{1}{2k-1}, \frac{1}{2k-2} \right]}.
\end{array}
\right. \label{1.1}
\end{equation}
With auxiliary functions
\begin{equation}
\varepsilon_1(x) =
\left\{
\begin{array}{lll}
1,  & x \in \displaystyle \bigcup_{k \geq 1} \left( \frac{1}{2k}, \frac{1}{2k-1} \right], \\
-1, & x \in \displaystyle \bigcup_{k \geq 2} \left( \frac{1}{2k-1}, \frac{1}{2k-2} \right]
\end{array} \right. \label{1.2}
\end{equation}
and
\begin{equation}
a_1(x) =
\left\{
\begin{array}{lll}
\displaystyle \left\lfloor\frac{1}{x}\right\rfloor   & x \in \displaystyle \bigcup_{k \geq 1} \left( \frac{1}{2k}, \frac{1}{2k-1} \right], \\
1+\displaystyle\left\lfloor\frac{1}{x}\right\rfloor & x \in \displaystyle \bigcup_{k \geq 2} \left( \frac{1}{2k-1}, \frac{1}{2k-2} \right]
\end{array} \right. \label{1.3}
\end{equation}
we arrive at
\begin{equation*}
T(x) = \varepsilon_1(x) \left( \frac{1}{x} - a_1(x) \right), \quad x \in (0, 1). \label{1.4}
\end{equation*}
Note that
\begin{equation*}
T(x) = \varepsilon_1(x) \left( \frac{1}{x} - (2k-1) \right), \label{1.5}
\end{equation*}
where $\varepsilon_1(x) = 1$ if $x \in \left( \frac{1}{2k}, \frac{1}{2k-1} \right]$ and
$\varepsilon_1(x) = -1$ if $x \in \left( \frac{1}{2k-1}, \frac{1}{2k-2} \right]$.
We obtain
\begin{equation*}
x = \displaystyle \frac{1}{2k-1 + \varepsilon_1(x) T(x)} \label{1.6}
\end{equation*}
and therefore the map $T$ generates the continued fraction
\begin{equation} \label{1.7}
x = \displaystyle \frac{1}{a_1 + \displaystyle \frac{\varepsilon_1}{a_2 + \displaystyle \frac{\varepsilon_2}{a_3 + \ddots}}}
=: [1/a_1, \varepsilon_1/a_2, \varepsilon_2/a_3, \ldots],
\end{equation}
where
\begin{equation} \label{1.8}
\begin{split}
   a_n=a_n(x) = a_1\left( T^{n-1}(x) \right), \, \varepsilon_n=\varepsilon_n(x) = \varepsilon_1\left( T^{n-1}(x) \right), n \geq 1, \\
   \varepsilon_n  \in \{-1, +1\},\, a_n \geq 1, \, a_n \equiv 1(\mathrm{mod}\, 2)\mbox{ and } a_n +\varepsilon_n >1, \, n \geq 1.
\end{split}
\end{equation}
On the OCF expansion the iterates of the map $T$ act as a shift map by
\begin{equation*} \label{1.9}
T^n ([1/a_1, \varepsilon_1/a_2, \varepsilon_2/a_3, \ldots]) = [1/a_{n+1}, \varepsilon_{n+1}/a_{n+2}, \varepsilon_{n+2}/a_{n+3}, \ldots].
\end{equation*}
Let us denote
\begin{equation} \label{1.10}
\begin{split}
   r_n & = r_n(x) = a_n(x) + \varepsilon_n(x) T^n(x) \\
       & = a_n + [\varepsilon_n/a_{n+1}, \varepsilon_{n+1}/a_{n+2}, \varepsilon_{n+2}/a_{n+3}, \ldots], \quad  n\geq 1
\end{split}
\end{equation}
which yields
\begin{equation} \label{1.11}
r_n = a_n + \frac{\varepsilon_n}{r_{n+1}}, \, n\geq 1.
\end{equation}

The rational approximants to $x$ arise in a manner similar to that in the case of other continued fraction algorithms.
However, the OCF case is the most intricate, because the sequence of denominators of successive convergents in OCF$(x)$ is not necessarily increasing as in the regular continued fractions (RCF) or in the continued fractions with even partial quotients (ECF) cases.

Let us define
\begin{eqnarray}
  p_{-1} &=& 1, \, p_{0} =0, \, p_n=a_np_{n-1} + \varepsilon_{n-1}p_{n-2}, \label{1.12} \\
  q_{-1} &=& 0, \, q_{0} =1, \, q_n=a_nq_{n-1} + \varepsilon_{n-1}q_{n-2} \label{1.13}
\end{eqnarray}
for $n \geq 1$.
The sequence of rationals $\left\{ \displaystyle \frac{p_n}{q_n} \right\}$, $n \geq 1$ are the convergents to $x$ in $[0, 1]$.

The following elementary fundamental relations are satisfied
\begin{eqnarray}
  && p_{n-1} q_n - p_n q_{n-1} = (-1)^k \varepsilon_0 \varepsilon_1 \ldots \varepsilon_{n-1} =: \delta_n, \label{1.14}\\
  && \frac{p_{n-1}}{q_{n-1}} - \frac{p_{n}}{q_{n}}  = \frac{\delta_n}{q_{n-1}q_n}, \, n \geq 0, \label{1.15} \\
  && x = \frac{p_n+p_{n-1}\varepsilon_n t_n}{q_n+q_{n-1}\varepsilon_n t_n}, \, n \geq 0, \label{1.16}
\end{eqnarray}
where $t_n = T^n(x)$.
Equation (\ref{1.16}) is equivalent to
\begin{equation}\label{1.17}
  \varepsilon_n t_n = \frac{q_nx -p_n}{-q_{n-1}x + p_{n-1}}, \, n \geq 0.
\end{equation}
Upon (\ref{1.17}) we infer that for any irrational number $x \in [0, 1]$
\begin{equation*}\label{1.18}
 0 < \left| \frac{q_nx-p_n}{-q_{n-1}x+ p_{n-1}} \right| <1, \,  n \geq 0.
\end{equation*}
Let us consider
\begin{equation}\label{1.19}
s_0(x) = 0, \, s_1(x) = \frac{1}{a_1(x)}, \, s_n(x) = \frac{1}{a_n(x)+\varepsilon_{n-1}(x)s_{n-1}(x)}, \, n\geq 2
\end{equation}
which yields
\begin{equation}\label{1.20}
  s_n = [1/a_n, \varepsilon_{n-1}/a_{n-1}, \varepsilon_{n-2}/a_{n-2}, \ldots, \varepsilon_1/a_1].
\end{equation}
Obviously, $s_n = \displaystyle \frac{q_{n-1}}{q_n}$, $n \geq 0$.

The golden ratios $G = \frac{\sqrt{5}+1}{2}=1.6180\dots$ and $g = \frac{\sqrt{5}-1}{2}=0.6180\dots$ will be used often.
Without further mention we shall frequently use identities like
\begin{equation*}
g+1=G, \, g^2=1-g, \, G^2=G+1, \, gG=1, \, g+2 = G^2.
\end{equation*}
Denominators of successive convergents for OCF$(x)$ satisfy
\begin{equation} \nonumber
  \begin{split}
     \frac{q_n}{q_{n-1}} &= a_n + \varepsilon_{n-1} [1/a_{n-1}, \varepsilon_{n-2}/a_{n-2}, \ldots, \varepsilon_1/a_1] \geq  \\
       &\geq a_n - [1/3, -1/3, \ldots, -1/3] \\
       & > a_n - [1/3, -1/3, -1/3, \ldots] = a_n -1 + \frac{1}{G} = a_n -2 +G.
  \end{split}
\end{equation}
Also, one has
\begin{equation*}\label{1.21}
 \frac{q_n}{q_{n-1}} = a_n + \frac{\varepsilon_{n-1}}{\frac{q_{n-1}}{q_{n-2}}} < a_n + \frac{\varepsilon_{n-1}}{a_{n-1}-2+G} \leq a_n + \frac{1}{G-1} = a_n +G.
\end{equation*}
Whatever $n \geq 1$ we have $a_n \geq 3$ if and only if $0 < s_n < g^2$ and $a_n =1$ if and only if $g^2 < s_n < G$.

Studied by Rieger \cite{Rieger-1981} and Schweiger \cite{Schweiger-1982, Schweiger-1984} in the early time, the OCF continue to raise the interest in many ways. Any continued fraction algorithm on $[0, 1]$ generates a natural filtration $\left\{\mathcal{Y}_n \right\}$ of $\mathbb{Q} \cap [0, 1]$, obtained by taking into account the sum of the partial quotients of the rationals.
One can consider the simple non-decreasing functions
\[
Q_n : [0, 1] \rightarrow [0, 1], \quad Q_n(x) = \frac{\left| \left\{ y \in \mathcal{Y}_n : y < x \right\} \right|}{\left| \mathcal{Y}_n \right| - 1}. 
\]
For the regular continued fraction, $\mathcal{Y}_n$ is the set of rationals with sum of partial quotients at most $n$.
The limit $Q(x) := \displaystyle \lim_{n \rightarrow \infty} Q_n(x)$ provides an analogue of the Minkowski question mark function.
The resulting map $Q_O$ in the situation of continued fractions with odd partial quotients has been introduced and investigated by Zhabitskaya \cite{Zhabitskaya-2012}.
Actually, the analogue of Minkowski's question mark function related to continued fractions with odd partial quotients $Q_O(x)$ coincides with her $F^0(x)$.
Following Zhabitskaya's work \cite{Zhabitskaya-2012}, Boca and Linden \cite{Boca&L-2018} proved that the function $Q_O$ is H\"older continuous with best exponent $\frac{\log \lambda}{2 \log G} \approx 0.63317$, where $\lambda \approx 1.83929$ denotes the unique real root of the equation $x^3-x^2-x-1=0$.
They also proved that the map $Q_O$ linearizes the odd Gauss and the odd Farey maps.

Boca and Merriman \cite{Boca&M-2018} described coding of geodesics on the modular surface $\mathcal{M}_O$ connected to the dynamics of odd continued fractions.

Recently, Boca and Merriman \cite{Boca&M-2019} studied an analogue of Nakada's $\alpha$-continued fraction transformation in the setting of continued fractions with odd partial quotients and described the natural extension of this transformation.

The purpose of this paper is to develop a different approach from \cite{Popescu-1997b} and \cite{Sebe} to study the asymptotic behavior of the distribution functions in the OCF case.
Based on the finite $T$-invariant measure on the $\sigma$-algebra $\mathcal{B}_{[0,1]}$ of all Borel subsets of $[0,1]$
\begin{equation}\label{1.22}
  \rho (A) = \frac{1}{3\log G} \int_{A} \left( \frac{1}{x+G-1} - \frac{1}{x-G-1} \right)\mathrm{d}x, \quad A \in \mathcal{B}_{[0,1]}
\end{equation}
introduced by Schweiger \cite{Schweiger-1982}, Kalpazidou \cite{Kalpazidou-1986} investigated the ergodic behavior of a certain homogeneous random system with complete connections (RSCC). In fact, it was proved that this RSCC is uniformly ergodic and its associated transition operator under the invariant measure $\rho$ is regular with respect to the Banach space of Lipschitz functions.
These results allowed to find the limit $ \displaystyle \lim_{ n \to \infty} \mu (r_n >t) = \ell $
for a given non-atomic measure $\mu$ on $\mathcal{B}_{[0,1]}$ and to estimate the error $\mu (r_n >t) - \ell$.

On the other hand, Popescu \cite{Popescu-1997a, Popescu-1997b} studied the $[0, G]$-valued Markov chain $\left\{ s_n \right\}$, ${n\geq 1}$ and its associated RSCC and solved a variant of the Gauss-Kuzmin problem.

Sebe \cite{Sebe} solved a Gauss-Kuzmin-type problem for the OCF expansion by considering the transition operator of the Markov chain
$\left\{ s_n \right\}$, ${n\geq 1}$ as an operator on a certain Banach space of complex-valued functions of bounded variation.
It was also obtained an improvement of the result given in \cite{Popescu-1997b} concerning the convergence rate.
It should be said that Wirsing's method \cite{Wirsing-1974} cannot be applied in this case because the corresponding transition operator is not positive.

The paper is organized as follows. In Section 2 we introduce the definitions and present preliminary results. We also make a brief survey of the method used in \cite{Sebe}.

In Section 3, to continue our investigation on the asymptotic behavior of the distribution functions of the map $T$, we use an approach in the spirit of Sz\"usz \cite{Szusz-1961}. We mention that using the method of Sz\"usz to prove a Gauss-Kuzmin-type theorem for OCF expansions, we obtain a very good estimate of the convergence rate involved.

\section{Preliminary results}
\label{Sec:2}
\subsection{The random system with complete connections associated with the sequence $\left\{ s_n \right\}$}
In the sequel we present some investigations on the sequence $\left\{ s_n \right\}$, ${n\geq 1}$ studied in detail in \cite{Popescu-1997a}.

Denoting by $\displaystyle E_{i_1 i_2 \ldots i_n}^{1 j_1 \ldots j_{n-1}}$ the set of irrational numbers
$$x=\left[ 1/a_1(x), \varepsilon_1/a_2(x), \varepsilon_2/a_3(x), \ldots \right] \in [0, 1]$$
for which $a_{\ell} (x) = i_{\ell}$, $1 \leq \ell \leq n$, $\varepsilon_k (x) = j_{k}$, $1 \leq k \leq n-1$, with $j_k = \pm 1$, $i_k +j_k >1$, $1 \leq k \leq n-1$, $i_{\ell} \geq 1$, $i_{\ell} \equiv 1(\mathrm{mod}\, 2)$, $1 \leq \ell \leq n$, $n \geq 2$, we have
\begin{equation}\label{2.1}
\lambda \left( r_{n+1} > t, \, \varepsilon_n = e \left| E_{i_1 i_2 \ldots i_n}^{1 j_1 \ldots j_{n-1}} \right. \right) =
\left\{
\begin{array}{lll}
\displaystyle \frac{1-s^2_n}{2(t+e s_n)}, & i_n \neq 1, \, e= \pm 1, \\
0,                                        & i_n =1, \, e=-1,  \\
\displaystyle \frac{1+s_n}{t+s_n},        & i_n =1, \, e=1,
\end{array} \right.
\end{equation}
where $t \geq 1$, $n \geq 1$, and $\lambda$ is the Lebesgue measure (see \cite{Popescu-1997a}).
Also, since $r_1 (x) = \displaystyle \frac{1}{x}$, we have
\[
\lambda \left( r_1(x) > t \right) = \lambda \left( \left[0, \frac{1}{t} \right] \right) = \frac{1}{t}.
\]
As is well-known ( see \cite{Popescu-1997a}) the sequence $\left\{ s_n \right\}$, ${n\geq 1}$ is a $W$-valued Markov chain with transition probability function $Q$ defined as
\[
Q(w, B) = \sum_{\left\{ (e,i) : u(w(e,i)) \in B \right\}} P(w(e,i)), \quad w \in W, \, B \in \mathcal{W},
\]
where $W = [0, G]$, $\mathcal{W} = \mathcal{B}_{[0, G]}$= the collection of all Borel subsets of $W$,
$X = \{-1,1\}\times\{1,3,5,\ldots \}$, $\mathcal{X}$=the collection of all subsets of $X$,
\begin{equation*}\label{2.2}
u(w, (-1,i)) =
\left\{
\begin{array}{lll}
\displaystyle \frac{1}{i-w},     & w \in \left[ 0, g^2 \right)=:W_1, \\
\displaystyle \frac{1}{i-g^2},   & w \in \left[ g^2, G \right] =: W_2,
\end{array} \right.,
\quad u(w, (1,i)) = \frac{1}{i+w}, \quad w \in W,
\end{equation*}
\begin{equation}\label{2.4}
P(w, (e,i)) =
\left\{
\begin{array}{lll}
\displaystyle \frac{(1-w^2)(2-\delta(i,1))}{2(i-1+\delta(i,1)+ew)(i+1+ew)},  & w \in W_1, \\
\\
\displaystyle \frac{(1+w)(2-\delta(i,1))\delta(e,1)}{(i-1+\delta(i,1)+w)(i+1+w)},   & w \in W_2,
\end{array} \right.
\end{equation}
for all $i \geq 1$, $i\equiv 1(\mathrm{mod}\, 2)$ and $ e = \pm 1$, where $\delta$ is the Kronecker's symbol.

According to the general theory \cite{IG-2009}
\begin{equation} \label{2.5}
\left((W, \mathcal{W}), (X, \mathcal{X}), u, P\right)
\end{equation}
is the random system with complete connections (RSCC) associated with the sequence $\left\{ s_n \right\}$, ${n\geq 1}$.
Let $Q^n$, $n \geq 1$, be the $n$-step transition probability function associated with $Q$.
The stationary probability $Q^{\infty}$ for $\left\{ s_n \right\}$, ${n\geq 1}$ is given by
\begin{equation} \label{2.6}
Q^{\infty}(B) = \int_{B} \mathrm{d} \xi(w), \quad B \in \mathcal{W},
\end{equation}
where
\begin{equation} \label{2.7}
\xi(w) =
\left\{
\begin{array}{lll}
\displaystyle \frac{1}{3\log G} \log \frac{1+w}{1-w},  & w \in W_1, \\
\\
\displaystyle \frac{1}{3\log G} \log \frac{1+w}{1-g^2} ,  & w \in W_2.
\end{array} \right.
\end{equation}
Therefore, for any $B \in \mathcal{B}_{[0,G]}$ we have
\begin{equation}\label{2.8}
\int_{W} Q^{\infty} (\mathrm{d}w)Q(w,B) = Q^{\infty}(B).
\end{equation}

\subsection{An operatorial treatment}

Let us consider the transition operator $U$ associated with RSCC (\ref{2.5}) which is defined as
\begin{equation*}\label{3.1}
U f(w) = \sum_{(e,i) \in X} P(w, (e,i)) f(u(w, (e,i))), \quad w \in W,
\end{equation*}
for any $f \in B(W)$ (= the Banach space of bounded measurable complex-valued functions $f$ on $W$ under the supremum norm $|f| = \displaystyle \sup_{w \in W} |f(w)|$).
Note that $U$ is also the transition operator of the Markov chain $\left\{ s_n \right\}$, ${n \geq 1}$ and we have
\begin{equation*}\label{3.2}
U f(w) = \int_{W} Q(w, \mathrm{d} w')f(w')
\end{equation*}
which implies that
\begin{equation*}\label{3.3}
U^n f(w) = \int_{W} Q^n(w, \mathrm{d} w')f(w'), \quad w \in W, \, n \geq 1.
\end{equation*}

The basic ideea is to consider $U$ as an operator on $BV(W)$(=the Banach space of all complex-valued functions $f$ of bounded variation on $W$ under the norm $\|f\|_{\mathrm{v}} = \mathrm{var } f + |f|$). Remember that the variation $\mathrm{var }_A f$ over $A \subset W$ of $f \in B(W)$ is defined as
\[
\sup \sum_{i=1}^{k-1} \left| f(t_i) - f(t_{i+1}) \right|
\]
the supremum being taken over all $t_1 < \ldots < t_k \in A$ and $k \geq 2$.
We write simply $\mathrm{var } f$ for $\mathrm{var }_W f$ and, if $\mathrm{var } f < \infty$, then $f$ is called a function of bounded variation.

We recall two elementary results obtained in \cite{Sebe}.
\begin{proposition} \label{Prop1}
For any $f \in BV(W)$ we have
\begin{equation}\label{3.4}
\mathrm{var }\, Uf \leq \theta_1 \mathrm{var } f + \theta_2 |f|,
\end{equation}
where $\theta_1$ and $\theta_2$ are positive constants such that $\theta_1=0.4270508\ldots$ and $\theta_2 \leq 0.396312\ldots$
\end{proposition}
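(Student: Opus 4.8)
The plan is to expand $Uf$ over the branches of $X$ and estimate the variation branch by branch via the product rule for the variation. Writing $\phi_{e,i}(w):=P(w,(e,i))$ and $\psi_{e,i}(w):=u(w,(e,i))$, we have $Uf=\sum_{(e,i)\in X}\phi_{e,i}\cdot(f\circ\psi_{e,i})$. First I would record the elementary inequality $\mathrm{var}(\phi h)\le|\phi|\,\mathrm{var}\,h+|h|\,\mathrm{var}\,\phi$, which follows by summing over a partition the identity $\phi(w)h(w)-\phi(w')h(w')=\phi(w)\bigl(h(w)-h(w')\bigr)+h(w')\bigl(\phi(w)-\phi(w')\bigr)$, and apply it with $h=f\circ\psi_{e,i}$. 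On the region where $\phi_{e,i}\neq0$ each $\psi_{e,i}$ is a continuous monotone M\"obius transformation, so $\mathrm{var}(f\circ\psi_{e,i})=\mathrm{var}_{J_{e,i}}f$ with $J_{e,i}:=\psi_{e,i}(W)$, while $|f\circ\psi_{e,i}|\le|f|$. Using subadditivity of the variation for the resulting series gives
\begin{equation*}
\mathrm{var}\,Uf\le\sum_{(e,i)\in X}|\phi_{e,i}|\,\mathrm{var}_{J_{e,i}}f+|f|\sum_{(e,i)\in X}\mathrm{var}\,\phi_{e,i},
\end{equation*}
where $|\phi_{e,i}|=\sup_{w\in W}|\phi_{e,i}(w)|$.

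For the first sum I would show that the images $J_{e,i}$ have pairwise disjoint interiors and tile $(0,G)$. A direct computation gives $\psi_{(1,i)}(W)=[1/(i+G),1/i]$ and $\psi_{(-1,i)}(W_1)=[1/i,1/(i-g^2))$; using $1/(1+G)=g^2$, $1/(1-g^2)=G$, and $2-g^2=G$, one checks that the pieces attached to consecutive odd indices are adjacent and exhaust $(0,G)$. Granting this, the variation is countably additive over the partition, so $\sum_{(e,i)}\mathrm{var}_{J_{e,i}}f\le\mathrm{var}\,f$ and the first sum is bounded by $\bigl(\max_{(e,i)}|\phi_{e,i}|\bigr)\mathrm{var}\,f$. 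It then remains to locate the largest weight: from (\ref{2.4}) each $w\mapsto P(w,(e,i))$ is monotone on each of $W_1$ and $W_2$ and its maximum over $w$ decays quickly in $i$, so it suffices to examine $i=1$; the largest value is attained on the branch $(e,i)=(-1,1)$ as $w\uparrow g^2$, which yields $\theta_1=\dfrac{1+g^2}{2(2-g^2)}=0.4270508\ldots$

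The coefficient of $|f|$ is $\theta_2=\sum_{(e,i)\in X}\mathrm{var}\,\phi_{e,i}$. For each branch I would determine the monotonicity of $w\mapsto P(w,(e,i))$ separately on $W_1$ and on $W_2$, and then add the contribution of the jump at the interface $w=g^2$, where both the weights and the maps change their analytic form and the $e=-1$ weights fall to $0$. This gives $\mathrm{var}\,\phi_{e,i}$ in closed form, and summing the resulting convergent series over $e=\pm1$ and odd $i\ge1$ produces $\theta_2\le0.396312\ldots$

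The branchwise product-rule estimate is routine. The main obstacle is the explicit evaluation and summation defining $\theta_2$: each $\mathrm{var}\,\phi_{e,i}$ must account correctly for the discontinuity at $w=g^2$ and for any change of monotonicity of $P(\cdot,(e,i))$ inside $W_1$, and the infinite sum over $i$ must be controlled sharply enough to remain below the stated bound. Verifying the tiling of $(0,G)$ by the images $J_{e,i}$—which is exactly what collapses the transfer term into the single constant $\theta_1=\max_{(e,i)}|\phi_{e,i}|$—is the other point that demands care.
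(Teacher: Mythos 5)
The paper itself states Proposition~\ref{Prop1} without proof --- it is recalled from \cite{Sebe} --- so your argument has to stand on its own. Its first half does: the branchwise product rule for the variation, the verification that the images $J_{e,i}$ tile $(0,G)$ (indeed $i-g^2=(i-2)+G$, so $\psi_{(-1,i)}(W_1)=[1/i,\,1/(i-2+G))$ abuts $\psi_{(1,i-2)}(W)=[1/(i-2+G),\,1/(i-2)]$), and the identification of the transfer coefficient as $\theta_1=\sup_{(e,i)}\sup_{w}P(w,(e,i))=\frac{1+g^2}{2(2-g^2)}=\frac{3g-1}{2}=0.4270508\ldots$, attained on the branch $(e,i)=(-1,1)$ as $w\uparrow g^2$, are all correct and reproduce the stated value of $\theta_1$.

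The genuine gap is your coefficient of $|f|$. With your definition $\theta_2=\sum_{(e,i)}\mathrm{var}\,\phi_{e,i}$, the claimed bound $\theta_2\le 0.396312\ldots$ is false, and a single branch already violates it: since $\delta(e,1)=0$ for $e=-1$ in the definition of $P$, the weight $\phi_{(-1,1)}$ equals $\frac{1+w}{2(2-w)}$ on $W_1$, increasing from $\frac14$ to $\frac{1+g^2}{2(2-g^2)}\approx 0.4271$, and vanishes identically on $W_2$; hence $\mathrm{var}\,\phi_{(-1,1)}=(0.4271-0.25)+0.4271\approx 0.604>0.396312$. Likewise $\phi_{(1,1)}$ decreases from $\frac14$ to $\frac{g}{2(2+g^2)}\approx 0.130$ on $W_1$, jumps up to $\frac{1}{2+g^2}\approx 0.420$ at $w=g^2$, then decreases to $\frac{1}{2+G}\approx 0.276$ on $W_2$, giving variation $\approx 0.554$; summing over all branches yields roughly $1.8$. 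So your inequality is true but with a constant so large that $\theta_1+\theta_2>1$, which makes Corollary~\ref{Cor3.2} --- the entire purpose of the proposition --- unattainable. The loss is structural, not computational: term-by-term subadditivity discards the cancellation between branches. Because $\sum_{(e,i)}P(w,(e,i))\equiv 1$ (the weights telescope, on $W_1$ as well as on $W_2$), the large individual jumps at the interface $w=g^2$ compensate one another once recombined with the values of $f$: for instance the jump of $Uf$ at $g^2$ equals $\frac{2-g}{2}$ times a difference of two convex combinations of values of $f$, hence is bounded by $\frac{2-g}{2}\,\mathrm{var}\,f$ with no $|f|$ contribution whatsoever. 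Any proof reaching $\theta_2\le 0.396312\ldots$ must keep the branches coupled (pair the $e=+1$ and $e=-1$ families and exploit this telescoping/cancellation at $g^2$) rather than estimate each weight's variation separately, and this missing idea cannot be patched within the scheme you describe.
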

\begin{corollary} \label{Cor3.2}
  There exists a positive constant $\theta = \theta_1 + \theta_2 \leq 0.8233628\ldots$ such that for all $n \in \mathbb{N}$ and $f \in BV(W)$ we have
  \begin{eqnarray}
      \mathrm{var }\, U^n f &\leq& \theta^n \cdot \mathrm{var }\,f, \label{3.017} \\
      \left| U^n f - U^{\infty}f  \right| &\leq& \theta^n  \cdot \mathrm{var }\,f, \label{3.18}
  \end{eqnarray}\label{3.17}
where $U^{\infty} f = \displaystyle \int_{W} f(w) Q^{\infty} (\mathrm{d}w)$.
\end{corollary}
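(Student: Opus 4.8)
The plan is to obtain both estimates from Proposition~\ref{Prop1} by iteration, the decisive point being to neutralize the unwanted term $\theta_2|f|$ on the right-hand side of (\ref{3.4}). The structural fact that makes this possible is that $U$ fixes constants: since $P(w,\cdot)$ is a probability on $X$, one has $\sum_{(e,i)\in X}P(w,(e,i))=1$, so $U\mathbf{1}=\mathbf{1}$ and hence $U(f-c)=Uf-c$ for every constant $c$. Consequently $\mathrm{var}\,U(f-c)=\mathrm{var}\,Uf$ while $\mathrm{var}(f-c)=\mathrm{var}\,f$, which frees me to replace $f$ by $f-c$ in (\ref{3.4}) and then choose $c$ advantageously. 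I would also record, for later use, that $U^{\infty}f=\int_{W}f\,\mathrm{d}Q^{\infty}$ is a constant and that stationarity (\ref{2.8}) gives $U^{\infty}U=U^{\infty}$.

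First I would prove the one-step contraction $\mathrm{var}\,Uf\le\theta\,\mathrm{var}\,f$. Applying Proposition~\ref{Prop1} to $f-c$ gives $\mathrm{var}\,Uf\le\theta_1\mathrm{var}\,f+\theta_2|f-c|$ for every constant $c$. Taking $c=f(w_0)$ for a fixed $w_0\in W$ yields $|f-c|=\sup_{w}|f(w)-f(w_0)|\le\mathrm{var}\,f$, because the two-point chain $w_0,w$ is admissible in the definition of the variation; hence $\mathrm{var}\,Uf\le(\theta_1+\theta_2)\mathrm{var}\,f=\theta\,\mathrm{var}\,f$. Since $U^{n}f=U(U^{n-1}f)$, a straightforward induction then produces $\mathrm{var}\,U^{n}f\le\theta\,\mathrm{var}\,U^{n-1}f\le\theta^{n}\mathrm{var}\,f$, which is (\ref{3.017}).

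For the sup-norm estimate (\ref{3.18}) I would center the function: put $g=f-U^{\infty}f$, so that $\mathrm{var}\,g=\mathrm{var}\,f$ and $U^{n}g=U^{n}f-U^{\infty}f$ (as $U^{\infty}f$ is a constant fixed by $U^{n}$). Stationarity gives $U^{\infty}(U^{n}g)=U^{\infty}g=U^{\infty}f-U^{\infty}f=0$, i.e. $\int_{W}U^{n}g\,\mathrm{d}Q^{\infty}=0$. The point is that any $Q^{\infty}$-mean-zero function $h$ is controlled in sup norm by its variation: for every $w_0$,
\[
|h(w_0)|=\Bigl|\int_{W}\bigl(h(w_0)-h(w)\bigr)\,\mathrm{d}Q^{\infty}(w)\Bigr|\le\mathrm{var}\,h,
\]
since $|h(w_0)-h(w)|\le\mathrm{var}\,h$ and $Q^{\infty}$ is a probability. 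Applying this with $h=U^{n}g$ and invoking (\ref{3.017}) gives $\bigl|U^{n}f-U^{\infty}f\bigr|=|U^{n}g|\le\mathrm{var}\,U^{n}g\le\theta^{n}\mathrm{var}\,g=\theta^{n}\mathrm{var}\,f$, which is (\ref{3.18}).

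The one genuinely delicate step is the passage from the mixed bound (\ref{3.4}) to a pure contraction in the variation seminorm: it relies entirely on exploiting $U\mathbf{1}=\mathbf{1}$ to trade the uncontrolled factor $|f|$ for $\inf_{c}|f-c|\le\mathrm{var}\,f$. (A sharper constant $\theta_1+\theta_2/\sqrt{3}$ would follow from Jung's theorem applied to the planar range of $f$, but the crude estimate already yields the clean value $\theta=\theta_1+\theta_2$, so I would not pursue it.) Everything else is routine: the sup-norm bound reduces, through the centering $g=f-U^{\infty}f$ and stationarity, to the elementary observation that a mean-zero function is dominated in sup norm by its own variation.
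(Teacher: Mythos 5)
Your proof is correct. There is, however, nothing in the paper to compare it against: the authors state both Proposition \ref{Prop1} and Corollary \ref{Cor3.2} without proof, recalling them as known results from \cite{Sebe}, so your argument supplies a derivation the paper omits. What you do is the standard (and essentially forced) route: the mixed bound (\ref{3.4}) cannot be iterated as it stands, and your use of $U\mathbf{1}=\mathbf{1}$ to replace $|f|$ by $\inf_{c}|f-c|\leq \mathrm{var}\,f$ is exactly the step that turns it into the one-step contraction $\mathrm{var}\,Uf\leq(\theta_1+\theta_2)\,\mathrm{var}\,f$, after which (\ref{3.017}) follows by induction (legitimate, since $U^{n-1}f$ stays in $BV(W)$ by the induction hypothesis). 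Your proof of (\ref{3.18}) is equally sound: stationarity (\ref{2.8}) gives $\int_W U^{n}g\,\mathrm{d}Q^{\infty}=\int_W g\,\mathrm{d}Q^{\infty}=0$ for the centered function $g=f-U^{\infty}f$, and a $Q^{\infty}$-mean-zero function is indeed dominated in sup norm by its variation because $Q^{\infty}$ is a probability measure. Every step checks out; the Jung's-theorem aside is correct but, as you say, unnecessary for the stated constant $\theta=\theta_1+\theta_2$.
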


Let us define two functions
\begin{equation}\label{3.01}
  F_n(x,e) : = \lambda \left( \left. r_{n+k+1} > \frac{1}{x}, \varepsilon_{n+k} = e \right| E_{i_1i_2\ldots i_k}^{1 j_1 \ldots j_{k-1}} \right)
\end{equation}
and
\begin{equation}\label{3.02}
  F(x,e) := \displaystyle \int_{W_1} \frac{(1-y^2)x}{2(1+yx)}Q^{\infty}(\mathrm{d}y)+
  \int_{W_2} \frac{(1+y)\delta(e,1)x}{ 1+yx }Q^{\infty}(\mathrm{d}y)
\end{equation}
for $x \in [0,1]$, $e = \pm1$, $n > 0$ and $k \geq 1$.

The Corollary \ref{Cor3.2} allows us to solve a Gauss-Kuzmin-type problem, namely to obtain the asymptotic behavior as $n \to \infty$ of the distribution function $F_n$ and estimate the convergence rate.
In fact, we show that for all $n \geq 1$, $x \in [0,1]$, $e = \pm1$, we have
\begin{equation*} \label{3.03}
  \left| F_n(x,e) - F(x,e) \right| \leq \theta^n.
\end{equation*}
Our convergence rate, $\mathcal{O}\left(\theta^n\right)$, with $\theta\leq 0.8233628\ldots < 0.854120\ldots$, is better than the one obtained in \cite{Popescu-1997b}.
To proceed, by (\ref{2.1}) we have
\begin{equation}\label{3.25}
\begin{split}
   F_0 (x,e) &= \lambda \left( \left. r_{k+1} > \frac{1}{x}, \varepsilon_k = e \right| E_{i_1 i_2 \ldots i_k}^{1 j_1 \ldots j_{k-1}} \right) \\
             &=
 \left\{
        \begin{array}{lll}
            \displaystyle \frac{(1-s^2_k)x}{2(1+e s_k x)}, & s_k \in W_1, e= \pm 1, \\
            0,                                             & s_k \in W_2, e= -1, \\
            \displaystyle \frac{(1+s_k)x}{1+s_k x},        & s_k \in W_2, e= 1.
        \end{array}
\right.
\end{split}
\end{equation}
Note that we also have
\begin{equation}\label{3.26}
F_0 (x,e) =
\left\{
        \begin{array}{lll}
           \displaystyle\int_{W_1} \frac{(1-y^2)x}{2(1+yx)} \mathrm{d}G_0(y) + \int_{W_2} \frac{(1+y)x}{1+yx} \mathrm{d}G_0(y) , &e=1, \\
           \\
           \displaystyle\int_{W_1} \frac{(1-y^2)x}{2(1-yx)} \mathrm{d}G_0(y) , &e=-1,
        \end{array}
\right.
\end{equation}
with
\begin{equation}\label{3.27}
G_0(y) =
\left\{
        \begin{array}{lll}
           0, & y \leq s_k, \\
           1, & y > s_k,
        \end{array}
\right. \, y \in W.
\end{equation}
Now, for any $n \geq 1$, by (\ref{3.25}) we have
\begin{equation*}\label{3.28}
  \begin{split}
     F_n(x,e) &= \sum \lambda \left( r_{n+k+1} > \frac{1}{x}, \varepsilon_{n+k}=e, \varepsilon_{n+k-1}=j_n, \right.\\
     &\left. \left. \qquad \qquad a_{n+k} = i_n, \ldots, \varepsilon_{k}=j_1, a_{k+1}=i_1 \right| E_{i_1 i_2 \ldots i_k}^{1 j_1 \ldots j_{k-1}} \right) \\
     &= \sum  \lambda \left( \left. r_{n+k+1} > \frac{1}{x}, \varepsilon_{n+k}=e \right| E_{i_1 \ldots i_k, i'_1 \ldots i'_n}^{1 \ldots j_{k-1}, j'_1 \ldots j'_n}  \right) \\
     &\qquad \, \,   \times \lambda \left( \left. \varepsilon_{n+k-1}=j'_n, a_{n+k} = i'_n, \ldots, \varepsilon_{k}=j'_1, a_{k+1}=i'_1 \right| E_{i_1 \ldots i_k}^{1 \ldots j_{k-1}} \right) \\
     &=
      \left\{
        \begin{array}{lll}
           \displaystyle\int_{W_1} \frac{(1-y^2)x}{2(1+yx)} \mathrm{d}G_n(y) + \int_{W_2} \frac{(1+y)x}{1+yx} \mathrm{d}G_n(y) , &e=1, \\
           \\
           \displaystyle\int_{W_1} \frac{(1-y^2)x}{2(1-yx)} \mathrm{d}G_n(y) , &e=-1,
        \end{array}
\right.
  \end{split}
\end{equation*}
where the sums are taken over all $i'_1, \ldots, i'_n \geq 1$ and $j'_{\ell} = \pm 1$, $1 \leq \ell \leq n$, for which
$i'_{\ell} \equiv 1(\mathrm{mod}\, 2)$, $i'_k+ j'_k >1$, $1 \leq \ell \leq n$, $1 \leq k \leq n-1$, and
\begin{equation*}\label{3.29}
  G_n(y) = G_n \left( y, E_{i_1 i_2 \ldots i_k}^{1 j_1 \ldots j_{k-1}} \right) = Q^n \left( s_k, [0,y) \right), \, y \in W, \, n \geq 1.
\end{equation*}

Let us define $G^{\infty} (y) = Q^{\infty} ([0,y))$, $y \in W$.
\begin{theorem} \label{Th.G}
For all $n \geq 1$, $x \in [0, 1]$, $e= \pm1$, $\theta \leq 0.8233628\ldots$ and $y \in W$ we have
\begin{eqnarray}
  &&\left| F_n(x,e) - F(x,e) \right| \leq \theta^n, \label{3.30} \\
  &&\left| G_n(y) - G^{\infty}(y)\right| \leq \theta^n. \label{3.031}
\end{eqnarray}
\end{theorem}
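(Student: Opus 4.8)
The plan is to derive both estimates from Corollary \ref{Cor3.2}, specifically from the contraction bound (\ref{3.18}), by exhibiting $F_n, G_n$ as values of $U^n$ applied to suitable test functions and $F, G^{\infty}$ as the corresponding values of $U^{\infty}$. I would begin with (\ref{3.031}), which is the cleaner of the two. Fix $y \in W$ and consider the half-open indicator $f_y := \mathbf{1}_{[0,y)} \in BV(W)$. Since $U^n g(w) = \int_W Q^n(w,\mathrm{d}w')\,g(w')$, one has $U^n f_y(s_k) = Q^n(s_k,[0,y)) = G_n(y)$, while $U^{\infty} f_y = \int_W f_y\,\mathrm{d}Q^{\infty} = Q^{\infty}([0,y)) = G^{\infty}(y)$. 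Such an indicator on $[0,G]$ has a single unit jump, so $\mathrm{var}\,f_y = 1$, and (\ref{3.18}) yields $|G_n(y) - G^{\infty}(y)| = |U^n f_y(s_k) - U^{\infty} f_y| \leq \theta^n\,\mathrm{var}\,f_y = \theta^n$, which is exactly (\ref{3.031}).

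For (\ref{3.30}) I would run the same mechanism with a different test function. For fixed $x \in [0,1]$ and $e = \pm 1$, define $h_{e,x} : W \to \mathbb{R}$ by $h_{e,x}(y) = \frac{(1-y^2)x}{2(1+eyx)}$ on $W_1$, together with $h_{1,x}(y) = \frac{(1+y)x}{1+yx}$ and $h_{-1,x}(y) = 0$ on $W_2$. Reading off the integral representation of $F_n$ and recalling that $\mathrm{d}G_n(y)$ is the Lebesgue--Stieltjes measure $Q^n(s_k,\mathrm{d}y)$, one gets $F_n(x,e) = \int_W h_{e,x}\,\mathrm{d}G_n = U^n h_{e,x}(s_k)$; by (\ref{3.02}) the same integrand gives $F(x,e) = \int_W h_{e,x}\,\mathrm{d}Q^{\infty} = U^{\infty} h_{e,x}$. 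Applying (\ref{3.18}) then produces $|F_n(x,e) - F(x,e)| \leq \theta^n\,\mathrm{var}\,h_{e,x}$, so the whole statement reduces to the uniform bound $\mathrm{var}\,h_{e,x} \leq 1$.

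That variation estimate is the real work and the step I expect to be the main obstacle. On each of $W_1$ and $W_2$ the function $h_{e,x}$ is smooth, and a direct sign check of the derivative shows it is monotone there (for $e=1$ decreasing on $W_1$ and increasing on $W_2$; for $e=-1$ it may exhibit one interior turning point on $W_1$ when $x$ is small), while at $w=g^2$ there is a single jump. Consequently $\mathrm{var}\,h_{e,x}$ is an explicit elementary function $V_e(x)$; for $e=1$ it collapses, using $1+G = G^2$ and $gG = 1$, to $V_1(x) = \frac{x}{2} - \frac{x(1-g^4)}{1+g^2 x} + \frac{xG^2}{1+Gx}$, and the task is to prove $V_e(x) \leq 1$ for all $x \in [0,1]$. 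I would verify this by evaluating the endpoints, where $V_e(0)=0$ and $V_e(1) = \tfrac{3}{2} - g \approx 0.882$, and by controlling the interior through the monotonicity already established, checking in the case $e=-1$ that the interior maximum of the $W_1$-piece, attained at $y^{*} = (1-\sqrt{1-x^2})/x$, remains bounded (indeed $y^{*}$ leaves $W_1$ once $x$ exceeds the value solving $y^{*}=g^2$). This yields $V_e(x) < 1$ throughout $[0,1]$, which closes the estimate $|F_n(x,e) - F(x,e)| \leq \theta^n$ and completes (\ref{3.30}).
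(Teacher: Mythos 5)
Your proposal is correct, and for the bound (\ref{3.031}) it is exactly the paper's argument: the indicator $f_y$, the identities $G_n(y)=U^nf_y(s_k)$, $G^{\infty}(y)=U^{\infty}f_y$, and (\ref{3.18}) with $\mathrm{var}\, f_y=1$. For (\ref{3.30}) your route is structurally different and more direct. The paper first proves (\ref{3.031}) and then transfers it to $F_n$ by a Stieltjes integration by parts: writing $F_n(x,e)-F(x,e)=\int h_{e,x}\,\mathrm{d}\left(G_n-G^{\infty}\right)$, it moves the derivative onto the kernel and bounds the result by $\theta^n$ times the mass of $|\mathrm{d}h_{e,x}|$ plus the jump at $g^2$; the sup expressions in (\ref{3.39}) are precisely the total variations you compute (for instance the jump term $\frac{5g^2x}{2(1+g^2x)}$ is the jump of your $h_{1,x}$ at $g^2$, since $(1+g^2)^2=5g^2$, and the integrand $\frac{x^2+2xy+x^2y^2}{2(1+yx)^2}$ is $\left|h_{1,x}'\right|$ on $W_1$). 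You instead note that $F_n(x,e)=U^nh_{e,x}(s_k)$ and $F(x,e)=U^{\infty}h_{e,x}$, so (\ref{3.18}) applies in one stroke to the kernel itself, bypassing both the intermediate use of (\ref{3.031}) and the integration by parts; the two proofs then meet at the same quantitative crux, $\mathrm{var}\, h_{e,x}\leq 1$. Your closed forms for that crux check out: $V_1(x)=\frac{x}{2}-\frac{x(1-g^4)}{1+g^2x}+\frac{xG^2}{1+Gx}$ has derivative at least $\frac{1}{2}+\frac{1}{G^2}-(1-g^4)>0$, hence is increasing with maximum $V_1(1)=\frac{3}{2}-g\approx 0.882<1$; and for $e=-1$ your two regimes (turning point $y^*=(1-\sqrt{1-x^2})/x$ inside or outside $W_1$, with threshold $x=\frac{2g^2}{1+g^4}$) reproduce exactly the paper's second and third expressions in (\ref{3.39}), with maxima approximately $0.43$ and $\frac{3}{2}-g$. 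As a bonus, your formulation silently corrects two slips in the paper: the kernel in (\ref{3.02}) should carry $1+eyx$ (as the paper itself uses when forming (\ref{3.32})), and the first line of (\ref{3.39}) contains a stray factor $y$ that your $V_1(x)$ shows should not be there.
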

\begin{proof}
\, First, we have
\begin{equation*}\label{3.31}
  \begin{split}
      & \left| F_n(x,1) - F(x,1) \right| \\
      &\qquad = \left| \displaystyle\int_{W_1} \frac{(1-y^2)x}{2(1+yx)} \mathrm{d} \left(G_n(y) - G^{\infty}(y)\right) + \int_{W_2} \frac{(1+y)x}{1+yx} \mathrm{d}\left(G_n(y) - G^{\infty}(y)\right) \right|  \\
       &\qquad  \leq \displaystyle\int_{W_1} \left| G_n(y) - G^{\infty}(y) \right| \frac{x^2+2xy+x^2y^2}{2(1+yx)^2} \mathrm{d}y  \\
       &\qquad + \displaystyle\int_{W_2} \left| G_n(y) - G^{\infty}(y) \right| \frac{x(1-x)}{(1+yx)^2} \mathrm{d}y + \left| G_n(g^2) - G^{\infty}(g^2) \right| \frac{5g^2x}{2(1+g^2x)}.
  \end{split}
\end{equation*}
Also,
\begin{equation}\label{3.32}
      \left| F_n(x,-1) - F(x,-1) \right| = \left| \displaystyle\int_{W_1} \frac{(1-y^2)x}{2(1-yx)} \mathrm{d}\left(G_n(y) - G^{\infty}(y)\right)  \right|.
\end{equation}
If $0 < x \leq \displaystyle \frac{2g^2}{g^4+1}$, then (\ref{3.32}) becomes
\begin{equation}\label{3.33}
\begin{split}
      &\left| F_n(x,-1) - F(x,-1) \right| \leq \int_{0}^{\frac{1-\sqrt{1-x^2}}{x}}  \left| G_n(y) - G^{\infty}(y) \right| \frac{x(xy^2+x-2y)}{2\left(1-yx^2\right)}\mathrm{d}y \\
      &\qquad \qquad \qquad + \int_{\frac{1-\sqrt{1-x^2}}{x}}^{g^2} \left| G_n(y) - G^{\infty}(y) \right| \frac{x(-xy^2-x+2y)}{2(1-yx^2)}\mathrm{d}y \\
      & \qquad \qquad \qquad + \left| G_n(g^2) - G^{\infty}(g^2) \right| \frac{(1-g^4)x}{2(1-g^2x)}.
\end{split}
\end{equation}
And if $\displaystyle \frac{2g^2}{g^4+1} < x \leq 1$, then (\ref{3.32}) becomes
\begin{equation}\label{3.34}
\begin{split}
\left| F_n(x,-1) - F(x,-1) \right| &\leq \displaystyle\int_{W_1} \left| G_n(y) - G^{\infty}(y) \right| \frac{x(xy^2+x-2y)}{2(1-yx)^2} \mathrm{d}y \\
                                     &+ \left| G_n(g^2) - G^{\infty}(g^2) \right| \frac{(1-g^4)x}{2(1-g^2x)}.
\end{split}
\end{equation}
Now, for all $y \in W$ we have
\begin{equation}\label{3.35}
  \left| G_n(y) - G^{\infty}(y) \right| = \left| Q^n (s_k, [0,y)) - Q^{\infty} ([0,y))\right|=\left|U^n f_y(s_k) - U^{\infty} f_y\right|,
\end{equation}
where $U$ is the transition operator of the Markov chain $\left\{s_n\right\}$, $n \geq 1$,
$\displaystyle U^{\infty} f = \int_{W} f(w)Q^{\infty}(\mathrm{d}y)$, and $f_y$ is a function defined on $W$ as
\begin{equation}\label{3.36}
  f_y(w)=
\left\{
        \begin{array}{lll}
            1, & 0\leq w \leq y, \\
            0, & y< w \leq G.
        \end{array}
\right.
\end{equation}
Hence, by (\ref{3.18}), for all $y \in W$ and $n >0$ we obtain
\begin{equation}\label{3.37}
  \left|U^n f_y(s_k) - U^{\infty} f_y\right| \leq \theta^n \, \mathrm{var }\, f_y = \theta^n.
\end{equation}
It follows that for all $y \in W$ and $n >0$ we have (\ref{3.031}).
%
%
Now, we can obtain a good estimate of $\left|F_n(x,e) - F(x,e)\right|$, namely
\begin{equation}\label{3.39}
\begin{split}
   &\left|F_n(x,e) - F(x,e)\right|  \\
   &\leq
  \left\{
        \begin{array}{lll}
            \displaystyle \theta^n \sup \left(\frac{xy(x^2+2x+7g+3)}{2(1+g^2x)(1+Gx)}\right), & e=1, \\
            \displaystyle \theta^n \sup \left(\frac{2( 1-\sqrt{1-x^2})}{x} - \frac{x}{2}\right), & e=-1 \mbox{ and } 0 < x \leq \displaystyle \frac{2g^2}{g^4+1}, \\
            \displaystyle \theta^n \sup \left(\frac{x (g^2x+1-2g^4)}{2(1-g^2x)}\right), & e=-1 \mbox{ and } \displaystyle \frac{2g^2}{g^4+1} < x \leq 1.
        \end{array}
\right.
\end{split}
\end{equation}
As is easy to see, the supremum does not exceed $1$, so that we get (\ref{3.30}).
%
%
\end{proof}

\section{A new solution of the Gauss-Kuzmin-type problem}
Let $\mu$ be a non-atomic probability measure on $\mathcal{B}_{[0,1]}$ and define:
\begin{eqnarray}
H_{0} (x) &:=& \mu ([0, x]), \ x \in [0, 1], \label{4.1} \\
H_{n} (x) &:=& \mu (r^{-1}_{n+1} < x), \ x \in [0, 1], \ n \geq 1 \label{4.2}
\end{eqnarray}
where $r_n$ is as in (\ref{1.10}). Then the following holds.
\begin{theorem}  \label{Th.GKL}
Let $r_n$ and $H_{n}$ be as in $(\ref{1.10})$ and $(\ref{4.2})$.
Then there exists a constant $0 < \eta < 1$ such that $H_n$ can be written as
\begin{equation}
H_{n} (x) = \frac{1}{3 \log G} \log \frac{(G+1)(G-1+x)}{(G-1)(G+1-x)} + \mathcal{O}(\eta^n) \label{4.3}
\end{equation}
uniformly with respect to $x \in [0, 1]$.
\end{theorem}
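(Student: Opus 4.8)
The plan is to recognize that $H_n$ is simply the distribution function of the iterate $T^n$ under $\mu$ and to iterate the induced Gauss--Kuzmin--L\'evy operator. Indeed, from $(\ref{1.10})$ and $T^{n-1}(x)=[1/a_n,\varepsilon_n/a_{n+1},\ldots]=1/r_n$ one gets $r_{n+1}^{-1}=T^{n}$, so $H_n(x)=\mu(T^n<x)$. Reading off from $(\ref{1.1})$ the two families of inverse branches of $T$ — the point $1/(2k-1+y)$ for $\varepsilon=+1$, $k\ge1$, and $1/(2k-1-y)$ for $\varepsilon=-1$, $k\ge2$ — and decomposing $\{T^n<x\}$ according to the first pair $(a_1,\varepsilon_1)$, I obtain for every non-atomic $\mu$ and $n\ge1$ the recursion
\begin{equation*}
H_n(x)=\sum_{k\ge1}\Big[H_{n-1}\big(\tfrac1{2k-1}\big)-H_{n-1}\big(\tfrac1{2k-1+x}\big)\Big]+\sum_{k\ge2}\Big[H_{n-1}\big(\tfrac1{2k-1-x}\big)-H_{n-1}\big(\tfrac1{2k-1}\big)\Big].
\end{equation*}
This defines a linear operator $\mathcal G$ with $H_n=\mathcal G H_{n-1}=\mathcal G^{\,n}H_0$, namely the action of the push-forward $T_{*}$ on distribution functions.

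Let $\Phi(x)$ be the leading term on the right of $(\ref{4.3})$. A direct evaluation of the cumulative distribution of $\rho$ in $(\ref{1.22})$ — using $G-1=g$, $G+1=G^2$, $gG=1$ — shows $\Phi(x)=\rho([0,x])$, and in particular $\Phi(1)=1$. Since $\mathcal G$ is the push-forward action on distribution functions and $\rho$ is $T$-invariant by $(\ref{1.22})$, we have $\mathcal G\Phi=\Phi$: $\Phi$ is the fixed point. By linearity $H_n-\Phi=\mathcal G^{\,n}(H_0-\Phi)$, so the whole problem reduces to showing that $\mathcal G^{\,n}$ contracts the supremum norm of a difference of distribution functions at a geometric rate, uniformly in $x\in[0,1]$.

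For the quantitative step I would follow Sz\"usz \cite{Szusz-1961} and pass to cylinders rather than to densities, since for a general non-atomic $\mu$ the iterates $H_n$ need not be absolutely continuous. By $(\ref{1.16})$, on the rank-$n$ cylinder fixed by $(a_1,\varepsilon_1,\ldots,a_n,\varepsilon_n)$ the set $\{T^n<x\}$ is the interval bounded by $p_n/q_n$ and $(p_n+p_{n-1}\varepsilon_n x)/(q_n+q_{n-1}\varepsilon_n x)$, so that $H_n(x)$ is a sum over all admissible digit strings of increments of $H_0$ across these intervals; the endpoints remain in $[0,1]$, which is what makes the final bound uniform in $x$. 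Writing each increment as an integral against $\mathrm dH_0=\mathrm d\mu$ and comparing the Möbius Jacobian $|\delta_n|/(q_n+q_{n-1}\varepsilon_n\tau)^2$ with the invariant density of $\rho$, the deviation $H_n-\Phi$ is controlled by the oscillation of this kernel over a single cylinder, which decays geometrically because the cylinder diameters do. Summing the resulting geometric series yields the bound $\mathcal O(\eta^n)$ with an explicit admissible $\eta$ strictly less than $1$.

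I expect the main obstacle to be the uniform geometric contraction of the cylinder diameters together with a bounded-distortion estimate for the inverse branches. As the paper stresses, OCF is the most intricate case because the denominators $q_n$ need not increase: the signs $\varepsilon_n=\pm1$ permit cancellation in $(\ref{1.13})$, so the naive comparison $q_n\ge q_{n-1}$ fails and a single branch need not be contracting. I would resolve this by grouping consecutive steps and using the two-sided estimate $a_n-2+G<q_n/q_{n-1}<a_n+G$ from Section~\ref{Sec:1} to recover a uniform two-step expansion factor bounded below by a constant greater than $1$; this factor furnishes the geometric rate $\eta$, whose explicit value may then be compared with the constant $\theta$ of Corollary~\ref{Cor3.2}.
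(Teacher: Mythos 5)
Your structural reduction is sound and is in fact the same as the paper's first step: your recursion for $H_n$ is exactly the Gauss--Kuzmin equation of Lemma \ref{GK.eq.} (with $i=2k-1$), and your identification of the fixed point $\Phi(x)=\rho([0,x])$ with $\mathcal{G}\Phi=\Phi$ is correct. The genuine gap lies entirely in the quantitative step, and it is twofold. First, the route you chose precisely in order to cover every non-atomic $\mu$ (``the iterates $H_n$ need not be absolutely continuous'') cannot succeed, because in that generality the conclusion is false. Take $\mu$ to be any non-atomic probability measure carried by the set $C$ of irrationals whose OCF digits satisfy $a_n\in\{3,5\}$ and $\varepsilon_n=+1$ for all $n$ (an admissible, uncountable, $T$-invariant Cantor-type set). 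For $x\in C$ one has $r_{n+1}\in[3,6]$, hence $T^n(x)=1/r_{n+1}\in[1/6,1/3]$ for every $n$, so $H_n(x)=\mu(T^n<x)=1$ for all $x>1/3$ and all $n$, while $\Phi(x)<1$ there; uniform convergence in (\ref{4.3}) fails. So some regularity hypothesis on $\mu$ is unavoidable --- the paper records it in the Remark following Lemma \ref{GK.eq.} (existence of a bounded derivative $H'_p$) --- and your own key step, ``comparing the M\"obius Jacobian with the invariant density,'' tacitly requires $\mathrm{d}\mu=h_0\,\mathrm{d}\lambda$ with $h_0$ regular enough to control its oscillation on cylinders at a geometric rate. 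The move to cylinders therefore does not buy the generality you invoke it for, and once a density is assumed it is no longer needed.

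Second, even granting a smooth density, the contraction estimate is asserted rather than proved: ``the oscillation of this kernel over a single cylinder decays geometrically because the cylinder diameters do'' is precisely the content that constitutes the proof, i.e.\ the analogue of the paper's Lemma \ref{lem.4.4}. The paper obtains it by a different and fully explicit mechanism: differentiate (\ref{4.4}) term by term, normalize by the invariant density to form $h_n$ in (\ref{4.6}), show by a mean-value argument that $|h'_{n+1}|$ attains its maximum at $x=0$, and read off $M_{n+1}\leq\eta M_n$ with $\eta=4g\sum_{i\,\mathrm{odd}}\frac{1}{(G+i)i(i+2)}\approx 0.3729<1$, after which integration gives (\ref{4.3}). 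Your fallback for the cylinder route is also weaker than you suggest: the two-sided estimate of Section \ref{Sec:1} alone does not yield a two-step expansion factor greater than $1$, since for $a_n=a_{n+1}=1$ the product of the lower bounds is $g^2<1$; one must combine the recursion (\ref{1.13}) with the admissibility constraint in (\ref{1.8}) ($\varepsilon_n=-1$ forces $a_n\geq 3$, hence $q_n/q_{n-1}>1+G$) to get $q_{n+1}/q_{n-1}>G$ in all cases. That part is fixable, but as written both the bounded-distortion lemma and the resulting contraction constant --- everything that actually proves the theorem --- are missing from your proposal.
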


To prove Theorem \ref{Th.GKL} we need the following results.
\begin{lemma} \label{GK.eq.}
For functions $\{H_{n}\}$ in $(\ref{4.2})$, the following Gauss-Kuzmin-type equation holds:
\begin{equation}
H_{n+1} (x) = \sum_{(i,\varepsilon)} \varepsilon \left(H_{n}\left(\frac{1}{i}\right) - H_{n}\left(\frac{1}{i+\varepsilon x}\right)\right) \label{4.4}
\end{equation}
for $x \in [0,1]$ and $n >0$. Here $(i,\varepsilon)$ denotes that $i \equiv 1 (\mathrm{mod }\, 2)$, $| \varepsilon | = 1$ and $i+\varepsilon>1$.
\end{lemma}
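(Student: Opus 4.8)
The plan is to derive the recurrence by conditioning on the value of the first partial quotient and the first sign produced by the map $T$, i.e. to decompose the event $\{r_{n+2}^{-1} < x\}$ according to which branch of $T$ was taken at the initial step. Recall from \eqref{1.11} that $r_{n+1} = a_{n+1} + \varepsilon_{n+1}/r_{n+2}$, so that knowing the pair $(a_{n+1},\varepsilon_{n+1}) = (i,\varepsilon)$ the condition on $r_{n+2}$ translates into a condition on $r_{n+1}$. First I would fix $x \in [0,1]$ and write $H_{n+1}(x) = \mu(r_{n+2}^{-1} < x)$, then expand this probability as a sum over all admissible symbols $(i,\varepsilon)$ — those with $i \equiv 1 \pmod 2$, $|\varepsilon| = 1$, and $i + \varepsilon > 1$ — of the measure of the set on which $a_{n+1} = i$, $\varepsilon_{n+1} = \varepsilon$, and $r_{n+2}^{-1} < x$.

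The core computation is to express, on the cylinder where $(a_{n+1},\varepsilon_{n+1}) = (i,\varepsilon)$, the constraint $r_{n+2}^{-1} < x$ in terms of $r_{n+1}^{-1}$. From $r_{n+1} = i + \varepsilon/r_{n+2}$ one solves $r_{n+2}^{-1} = \varepsilon(r_{n+1} - i)$, and I would invert the relation carefully keeping track of the sign $\varepsilon$: the event $r_{n+2}^{-1} < x$ becomes an interval condition on $r_{n+1}^{-1}$ with endpoints $1/i$ and $1/(i+\varepsilon x)$. Translating each contribution through the definition \eqref{4.2} of $H_n$ — noting that $\mu$-measures of such events are exactly differences $H_n(1/i) - H_n(1/(i+\varepsilon x))$ — and attaching the factor $\varepsilon$ that arises from the orientation of the interval (reflecting the $-1$ branch of $T$ in \eqref{1.1}) should produce precisely the summand $\varepsilon\bigl(H_n(1/i) - H_n(1/(i+\varepsilon x))\bigr)$ in \eqref{4.4}. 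Summing over all admissible $(i,\varepsilon)$ and using countable additivity of $\mu$ then yields the stated identity.

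The main obstacle I expect is bookkeeping the sign $\varepsilon$ correctly so that the orientation of each interval and the resulting sign in front of the difference of $H_n$-values come out consistently; in particular one must verify that when $\varepsilon = -1$ the interval endpoints $1/i$ and $1/(i - x)$ are ordered so that the factor $\varepsilon$ correctly converts a genuine (non-negative) measure into the signed form appearing in the sum. A secondary point requiring care is the admissibility constraint $i + \varepsilon > 1$, which excludes the forbidden combination $(i,\varepsilon) = (1,-1)$ (consistent with the vanishing middle case in \eqref{2.1}); I would check that restricting the sum to admissible symbols exactly accounts for which cylinders have positive measure, so that no spurious terms are introduced. Once the sign conventions and the admissibility range are pinned down, the identity follows directly from the definition of $r_n$ and the additivity of $\mu$, with no delicate estimates needed.
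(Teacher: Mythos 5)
Your proposal is correct and takes essentially the same route as the paper's proof: both decompose $H_{n+1}(x)=\mu(r_{n+2}^{-1}<x)$ according to the symbol pair $(a_{n+1},\varepsilon_{n+1})=(i,\varepsilon)$, use the relation $(\ref{1.11})$ to convert the condition on $r_{n+2}$ into an interval condition on $r_{n+1}^{-1}$ with endpoints $1/i$ and $1/(i+\varepsilon x)$, and then sum the resulting differences $\varepsilon\left(H_n\left(1/i\right)-H_n\left(1/(i+\varepsilon x)\right)\right)$ over admissible symbols, with the sign $\varepsilon$ exactly compensating the reversed orientation of the interval when $\varepsilon=-1$ and the constraint $i+\varepsilon>1$ excluding the forbidden pair $(1,-1)$.
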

\begin{proof}
\, From $(\ref{1.11})$ and $(\ref{4.2})$ we have
\begin{equation*}
  \begin{split}
      H_{n+1}(x) =& \mu \left( r^{-1}_{n+2} < x, \, \varepsilon_{n+1} = 1 \right) + \mu \left( r^{-1}_{n+2} < x, \, \varepsilon_{n+1} = -1 \right)\\
       =& \sum_{i \equiv 1 (\mathrm{mod }\, 2)} \mu \left( \frac{1}{i+x} < r^{-1}_{n+1} < \frac{1}{i} \right) \\
       &+ \sum_{i \equiv 1 (\mathrm{mod }\, 2),i \neq 1} \mu \left( \frac{1}{i} < r^{-1}_{n+1} < \frac{1}{i-x} \right) \\
       =& \sum_{(i,\varepsilon)} \varepsilon \left(H_{n}\left(\frac{1}{i}\right) - H_{n}\left(\frac{1}{i+\varepsilon x}\right)\right).
  \end{split}
\end{equation*}
\end{proof}
\begin{remark}
Assume that for some $p >0$, the derivative $H'_{p}$ exists everywhere in $[0, 1]$ and is bounded.
Then it is easy to see by induction that $H'_{p+n}$ exists and is bounded for all $n \geq 1$.
This allows us to differentiate $(\ref{4.4})$ term by term, obtaining
\begin{equation}
H'_{n+1}(x) = \sum_{(i, \varepsilon)} \frac{1}{(i+\varepsilon x)^2}H'_{n}\left(\frac{1}{i+\varepsilon x}\right). \label{4.5}
\end{equation}
\end{remark}
We introduce functions $\{ h_{n} \}$ as follows:
\begin{equation}
h_{n}(x) := \frac{H'_{n}(x)}{{(x+G-1)^{-1}} - {(x-G-1)}^{-1}}, \quad x \in [0, 1], \ n >0. \label{4.6}
\end{equation}
Then (\ref{4.5}) is
\begin{equation}\label{4.7}
h_{n+1} (x) = \left( G^2 - (1-x^2) \right) \sum_{(i, \varepsilon)} V(x, (i,\varepsilon)) h_n\left(\frac{1}{i+\varepsilon x}\right),
\end{equation}
where
\begin{equation}\label{4.8}
V(x, (i,\varepsilon)) = \frac{1}{\left( (G-1)(i+\varepsilon x) +1 \right)\left( (G+1)(i+\varepsilon x) - 1 \right)}.
\end{equation}
\begin{lemma} \label{lem.4.4}
For $\left\{ h_n\right\}$ in (\ref{4.6}), define $M_n:=\displaystyle \max_{x \in [0, 1]} \left| h'_n(x) \right|$.
Then
\begin{equation}\label{4.9}
  M_{n+1} \leq \eta \cdot M_n,
\end{equation}
where
\begin{equation}\label{4.10}
  \eta := 4 g \sum_{i=1,3,\ldots} \frac{1}{(G+i)i(i+2)}.
\end{equation}
\end{lemma}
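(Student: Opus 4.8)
The plan is to differentiate the recursion (\ref{4.7}) once more in $x$ and to bound the outcome by $M_n$ uniformly. Write $P(x)$ for the prefactor in (\ref{4.7}), set $A(x,(i,\varepsilon)) := P(x)\,V(x,(i,\varepsilon))$ for the coefficients, and let $g_{i,\varepsilon}(x) := 1/(i+\varepsilon x)$, so that $g'_{i,\varepsilon}(x) = -\varepsilon/(i+\varepsilon x)^2$. The product rule gives
\begin{equation*}
h'_{n+1}(x) = \sum_{(i,\varepsilon)} \partial_x A(x,(i,\varepsilon))\, h_n\!\left(g_{i,\varepsilon}(x)\right) + \sum_{(i,\varepsilon)} A(x,(i,\varepsilon))\, g'_{i,\varepsilon}(x)\, h'_n\!\left(g_{i,\varepsilon}(x)\right).
\end{equation*}
The second sum is harmless: since $A(x,(i,\varepsilon))\ge 0$ and $|g'_{i,\varepsilon}(x)| = (i+\varepsilon x)^{-2}$, it is bounded in absolute value by $M_n \sum_{(i,\varepsilon)} A(x,(i,\varepsilon))/(i+\varepsilon x)^2$. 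The first sum, which involves $h_n$ rather than $h'_n$, is the real difficulty, and the whole argument hinges on eliminating its zeroth-order part.

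The key device is the normalization identity $\sum_{(i,\varepsilon)} A(x,(i,\varepsilon)) = 1$ for all $x \in [0,1]$, which expresses that the operator in (\ref{4.7}) reproduces constants — consistent with the fact that the density of the invariant measure $\rho$ in (\ref{1.22}) is exactly the weight used to normalize $H'_n$ in (\ref{4.6}), so that the limiting $h$ is constant. To prove it I would decompose $V$ by partial fractions: using $G-1=g$ and $G+1=G^2$ one finds $V(x,(i,\varepsilon)) = \tfrac{g}{2}\big(\tfrac{1}{(i+\varepsilon x)-g^2} - \tfrac{1}{(i+\varepsilon x)+G}\big)$, and since $G = 2-g^2$ the second denominator equals $(i+2)+\varepsilon x-g^2$. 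Summing over odd $i$ (with $i\ge 1$ for $\varepsilon=+1$ and $i\ge 3$ for $\varepsilon=-1$) the series telescopes to $\sum_{(i,\varepsilon)} V = 1/(G+2x-x^2)$, so that $P(x)=G+2x-x^2=G^2-(1-x)^2$ and $\sum_{(i,\varepsilon)} A(x,(i,\varepsilon))=1$ follows, the scalar factors collapsing via $gG=1$ and $1+g^3=2g$.

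Differentiating the identity yields $\sum_{(i,\varepsilon)} \partial_x A(x,(i,\varepsilon)) = 0$. Hence in the first sum I may subtract a constant and replace $h_n(g_{i,\varepsilon}(x))$ by $h_n(g_{i,\varepsilon}(x)) - h_n(\alpha)$ for any fixed $\alpha$ without changing its value. The mean value theorem then bounds each difference by $M_n\,|g_{i,\varepsilon}(x)-\alpha|$, reducing the first sum to size $O(M_n)$. Choosing $\alpha$ to be a shared endpoint of the partition intervals $g_{i,\varepsilon}([0,1])$ — equivalently, performing an Abel summation against the telescoped partial fractions so that the surviving $h_n$-differences run over adjacent branches — is what makes the spurious $-g^2$ shifts cancel, leaving clean integer denominators.

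Collecting both contributions gives $|h'_{n+1}(x)| \le M_n\,\Phi(x)$ for an explicit coefficient function $\Phi$, and taking the maximum over $x$ on the left yields $M_{n+1}\le \big(\sup_{x}\Phi\big)M_n$. The main obstacle is entirely at this last, computational stage: I must simplify $\Phi$ so that the denominators collapse precisely to the factors $(G+i)\,i\,(i+2)$ of (\ref{4.10}), extract the overall constant $4g$, and verify that $\sup_{x\in[0,1]}\Phi(x)=\eta$ (the supremum being taken at an endpoint of $[0,1]$). The delicate point is confirming that the cross-terms between the two sums combine to exactly this value rather than to a larger bound; the telescoping form of $V$ together with the identities $gG=1$, $1+g^3=2g$, $G-g^2=2g$ and $g^2+G=2$ is what drives the cancellations. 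Once $\Phi\le\eta$ is established (with $\eta<1$, as one checks numerically from (\ref{4.10})), the contraction (\ref{4.9}) follows.
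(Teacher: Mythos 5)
Your preparatory steps are sound: the normalization identity $\sum_{(i,\varepsilon)}A(x,(i,\varepsilon))=1$ is correct (it is exactly what the paper's telescoping of the partial fractions for $V$ expresses), and your Abel-summation/mean-value device for the terms $\partial_x A\cdot h_n$ is essentially the paper's treatment of its sums $S_1,\dots,S_4$. The gap is in the end-game. You plan to prove a pointwise bound $|h'_{n+1}(x)|\le M_n\Phi(x)$ by taking absolute values of every term and then to verify $\sup_x\Phi(x)=\eta$. This cannot work: your ``harmless'' second sum alone contains the branch $(i,\varepsilon)=(1,1)$, whose coefficient $A(x,(1,1))/(1+x)^2$ equals $1/G\approx 0.618$ at $x=0$; this single term already exceeds $\eta\approx 0.373$, and adding the remaining (all positive) contributions pushes the termwise bound at $x=0$ to about $1$ or beyond, so absolute values do not even clearly yield a contraction, let alone the constant $\eta$. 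The point is that $\eta$ is not the supremum of any termwise bound; it arises only from signed cancellations that taking absolute values destroys.

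What the paper does instead, and what is missing from your proposal, are two structural facts. First, at $x=1$ adjacent branches cancel exactly (there one may take $\alpha_i=\beta_{i+2}$), giving $h'_{n+1}(1)=0$ for every $n$; consequently the dangerous term with coefficient $g$ multiplying $h'_n(1)$ --- precisely the term that ruins your pointwise bound --- vanishes identically once $n\ge 1$. Second, the paper never bounds $|h'_{n+1}|$ everywhere: using $h'_{n+1}(1)=0$ and a mean-value/monotonicity argument it shows $M_{n+1}=|h'_{n+1}(0)|$, so only the exact signed formula at the single point $x=0$ is needed. There $\alpha_i=\beta_i$, $\gamma=\alpha_1$, and $G^2=G+1$ collapse the expression to $h'_{n+1}(0)=-4g\sum_i h'_n(\alpha_i)/\left((G+i)i(i+2)\right)$, whence $M_{n+1}\le\eta M_n$. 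Without these two ingredients --- the vanishing at $x=1$ and the localization of the maximum at $x=0$ --- your $\sup_x\Phi$ is genuinely larger than $\eta$, so the proposal as written does not prove the lemma.
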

\begin{proof}
\, Before we derive $(\ref{4.7})$, we bring it to a convenient form. First,
\begin{equation}\label{4.11}
\begin{split}
&h_{n+1} (x) = \left( G^2 - (1-x^2) \right) \\
&\times \left( \sum_{i=1,3,\ldots} V(x, (i,1)) h_n\left(\frac{1}{i+x}\right) + \sum_{i=3,5,\ldots} V(x, (i,-1)) h_n\left(\frac{1}{i-x}\right) \right).
\end{split}
\end{equation}
Since $\displaystyle\frac{1}{G-1}=G$ \, and \, $\displaystyle\frac{1}{G+1} = 2-G$, then
\begin{equation*}
\begin{split}
  V(x,(i,1)) &= \frac{1}{G^2-1} \frac{1}{(i+x+G)(i+x+G-2)} \\
             &= \frac{1}{2\left( G^2-1 \right)} \left( \frac{1}{i-2+x+G}-\frac{1}{i+x+G} \right).
\end{split}
\end{equation*}
Similarly, we get
\begin{equation*}\label{4.13}
  V(x,(i,-1)) = \frac{1}{2\left( G^2-1 \right)} \left( \frac{1}{i-2-x+G} - \frac{1}{i-x+G} \right).
\end{equation*}
Therefore, $(\ref{4.11})$ becomes
\begin{equation*}\label{4.14}
\begin{split}
h_{n+1} (x) &= \frac{\left( G^2 - (1-x^2) \right)}{2\left( G^2-1 \right)} \left( \sum_{i=1,3,\ldots} \left( \frac{1}{i-2+x+G} - \frac{1}{i+x+G} \right)h_n\left(\frac{1}{i+x}\right) \right. \\
 & + \left. \sum_{i=3,5,\ldots} \left( \frac{1}{i-2-x+G} - \frac{1}{i-x+G} \right) h_n\left(\frac{1}{i-x}\right) \right).
\end{split}
\end{equation*}
Now, by calculus we have
\begin{equation}\label{4.15}
h'_{n+1} (x) = \frac{g}{2} \left\{ 2(1-x) \left( S_{1} + S_{2}  \right) + \left( G^2 -(1-x)^2 \right) \left(  S_{3}+  S_{4} -  S_{5} +  S_{6} \right)\right\}
\end{equation}
where
\begin{eqnarray}
S_{1} &:=& \sum_{i=1,3,\ldots} \left( \frac{1}{i-2+x+G} - \frac{1}{i+x+G} \right) h_n\left(\frac{1}{i+x}\right), \label{4.16} \\
S_{2} &:=& \sum_{i=3,5,\ldots} \left( \frac{1}{i-2-x+G} - \frac{1}{i-x+G} \right) h_n\left(\frac{1}{i-x}\right), \label{4.17} \\
S_{3} &:=& \sum_{i=1,3,\ldots} \left( \frac{1}{(i+x+G)^2} - \frac{1}{(i-2+x+G)^2} \right) h_n\left(\frac{1}{i+x}\right), \label{4.18} \\
S_{4} &:=& \sum_{i=3,5,\ldots} \left( \frac{1}{(i-2-x+G)^2} - \frac{1}{(i-x+G)^2} \right) h_n\left(\frac{1}{i-x}\right), \label{4.19} \\
S_{5} &:=& \sum_{i=1,3,\ldots} \left( \frac{1}{i-2+x+G} - \frac{1}{i+x+G} \right) h'_n\left(\frac{1}{i+x}\right)\frac{1}{(i+x)^2}, \label{4.20} \\
S_{6} &:=& \sum_{i=3,5,\ldots} \left( \frac{1}{i-2-x+G} - \frac{1}{i-x+G} \right) h'_n\left(\frac{1}{i-x}\right)\frac{1}{(i-x)^2}. \label{4.21}
\end{eqnarray}
But,
\begin{eqnarray}
S_{1} &=& \frac{h_n \left( \frac{1}{x+1} \right)}{x+G-1}  - \sum_{i=1,3,\ldots} \frac{2}{(x+G+i)(x+i)(x+i+2)} h'_n(\alpha_i), \label{4.22} \\
S_{2} &=& \frac{h_n \left( \frac{1}{3-x} \right)}{-x+G+1}  - \sum_{i=3,5,\ldots} \frac{2}{(i-x+G)(i+2-x)(i-x)} h'_n(\beta_i), \label{4.23} \\
S_{3} &:=& \frac{-h_n \left( \frac{1}{x+1} \right)}{(x+G-1)^2}  + \sum_{i=1,3,\ldots} \frac{2}{(x+G+i)^2(x+i)(x+i+2)} h'_n(\alpha_i), \label{4.24} \\
S_{4} &=& \frac{h_n \left( \frac{1}{3-x} \right)}{(-x+G+1)^2}  - \sum_{i=3,5,\ldots} \frac{2}{(i-x+G)^2(i+2-x)(i-x)} h'_n(\beta_i), \label{4.25}
\end{eqnarray}
where $\displaystyle \frac{1}{x+i+2} < \alpha_i < \displaystyle \frac{1}{x+i}$ and
$\displaystyle \frac{1}{i+2-x} < \beta_i < \displaystyle \frac{1}{i-x}$.
From $(\ref{4.22})$-$(\ref{4.25})$ and $(\ref{4.15})$, we have
\begin{equation}\label{4.26}
\begin{split}
h'_{n+1} (x) &= g \left\{ -h'_n(\gamma)\frac{1-x}{(x+1)(3-x)} \right. \\
             &-\sum_{i=1,3,\ldots} \frac{2(1-x)}{(x+G+i)(x+i)(x+i+2)} h'_n(\alpha_i) \\
             &-\sum_{i=3,5,\ldots} \frac{2(1-x)}{(i-x+G)(i+2-x)(i-x)} h'_n(\beta_i) \\
             &+ \left(G^2-(1-x)^2\right) \left( \sum_{i=1,3,\ldots} \frac{1}{(x+G+i)^2(x+i)(x+i+2)} h'_n(\alpha_i) \right. \\
             &- \left. \left.\sum_{i=3,5,\ldots} \frac{1}{(i-x+G)^2(i+2-x)(i-x)} h'_n(\beta_i) -\frac{1}{2} S_{5} +\frac{1}{2} S_{6}\right) \right\},
\end{split}
\end{equation}
where $\displaystyle\frac{1}{3-x} < \gamma < \displaystyle\frac{1}{x+1}$.
Now, for $x=1$ we have $\alpha_i = \beta_{i+2}$, and then $h'_{n+1}(1) = 0$.
By the Mean Value Theorem, we know there exists at least one $c \in (0, 1)$ such that
\begin{equation}\label{4.27}
h'_{n+1} (1) - h'_{n+1} (0) = h''_{n+1} (c),
\end{equation}
i.e., $- h'_{n+1} (0) = h''_{n+1} (c)$.
If $h'_{n+1}(0) < 0$, then $h''_{n+1} (c) > 0$. Since $h'_{n+1} \neq 0$ on $(0, 1)$ and $h'_{n+1}$ is increasing in a neighbourhood of $c$,
it follows that $h'_{n+1}$ is increasing on $[0, 1]$.
Similarly, if $h'_{n+1}(0) > 0$, then $h''_{n+1} (c) < 0$, and it follows that $h'_{n+1}$ is non-increasing on $[0, 1]$.
In both cases it results that
\begin{equation}\label{4.28}
M_{n+1} = \max_{x \in [0, 1]} \left| h'_{n+1} (x) \right| = \left| h'_{n+1} (0) \right|.
\end{equation}
For, $x=0$, we have $\alpha_i = \beta_{i}$, $\gamma = \alpha_1$ and we obtain
\begin{equation} \label{4.29}
h'_{n+1}(0) = -4 \cdot g \cdot \sum_{i=1,3,5, \ldots} \frac{h'_n(\alpha_i)}{(G+i)i(i+2)}.
\end{equation}
Thus,
\begin{equation}\label{4.30}
M_{n+1} \leq 4 \cdot g \cdot M_n \cdot \sum_{i=1,3,5, \ldots} \frac{1}{(G+i)i(i+2)}
\end{equation}
and the proof is complete.
\end{proof}

\noindent \textbf{Proof of Theorem \ref{Th.GKL}.}
For $\{H_n\}$ in (\ref{4.2}), we introduce a function $R_n(x)$ such that
\begin{equation}
H_{n} (x) = \frac{1}{3 \log G} \log \frac{(G+1)(G-1+x)}{(G-1)(G+1-x)} + R_n(x). \label{4.31}
\end{equation}
Because $H_n(0)=0$ and $H_n(1)=1$, we have $R_n(0)=R_n(1)=0$.
To prove Theorem \ref{Th.GKL}, we have to show the existence of a constant
$0 < \eta < 1$ such that
\begin{equation}\label{4.32}
R_n(x) = \mathcal{O}(\eta^n).
\end{equation}
For $\{h_n\}$ in (\ref{4.6}), if we can show that $h_n(x) = \frac{1}{3 \log G} + \mathcal{O}(\eta^n)$,
then integrating (\ref{4.6}) will show (\ref{4.3}).
To demonstrate that $\{h_n\}$ has this desired form, it suffices to prove the following lemma.
\begin{lemma} \label{lem.4.5}
For any $x \in [0, 1]$ and $n >0$ there exists a constant $\eta:=\eta(x)$ with $0 < \eta < 1$ such that
\begin{equation}
h'_{n}(x) = {\mathcal O}(\eta^n). \label{4.33}
\end{equation}
\end{lemma}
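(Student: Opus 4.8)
The plan is to recognize that Lemma~\ref{lem.4.5} is essentially a corollary of the contraction estimate~(\ref{4.9}) already established in Lemma~\ref{lem.4.4}, provided we can verify that the constant $\eta$ in~(\ref{4.10}) genuinely satisfies $0 < \eta < 1$. First I would invoke the Remark following Lemma~\ref{GK.eq.}: for some $p > 0$ the derivative $H'_p$ exists and is bounded on $[0,1]$, and this regularity propagates to all larger indices, so that $h_p$ defined by~(\ref{4.6}) is differentiable with $M_p = \max_{x \in [0,1]} |h'_p(x)| < \infty$. This supplies a finite base value from which the recursion can be launched.

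Next I would simply iterate~(\ref{4.9}). Since $M_{n+1} \leq \eta\, M_n$ for all $n$, a trivial induction gives $M_n \leq \eta^{\,n-p} M_p$ for every $n \geq p$. Because $|h'_n(x)| \leq M_n$ pointwise, we obtain
\[
|h'_n(x)| \leq \eta^{\,n-p} M_p = \left( M_p\, \eta^{-p} \right) \eta^{\,n}
\]
uniformly in $x \in [0,1]$, which is precisely the bound $h'_n(x) = \mathcal{O}(\eta^n)$ asserted in~(\ref{4.33}). Note that the implied constant is independent of $x$, so the conclusion is in fact uniform rather than merely pointwise, notwithstanding the notation $\eta(x)$ in the statement.

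The only real work is therefore to confirm $0 < \eta < 1$. Positivity is immediate, since every summand in~(\ref{4.10}) is positive. For the upper bound I would exploit two elementary facts. Bounding the first factor crudely by $\frac{1}{G+i} \leq \frac{1}{G+1}$ for all $i \geq 1$, and then using the partial-fraction identity $\frac{1}{i(i+2)} = \frac{1}{2}\left( \frac{1}{i} - \frac{1}{i+2} \right)$, the remaining sum telescopes over the odd integers:
\[
\sum_{i=1,3,5,\ldots} \frac{1}{i(i+2)} = \frac{1}{2}\left( \frac{1}{1} - \frac{1}{3} + \frac{1}{3} - \frac{1}{5} + \cdots \right) = \frac{1}{2}.
\]
Consequently
\[
\eta \leq \frac{4g}{G+1} \cdot \frac{1}{2} = \frac{2g}{G+1} = \frac{2}{G^3} = \frac{2}{2G+1} = 0.472\ldots < 1,
\]
where I used $G+1 = G^2$, $gG = 1$, and $G^3 = G \cdot G^2 = G(G+1) = 2G+1$.

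The main obstacle here is mild: it is purely the algebraic verification that $\eta < 1$, since the entire dynamical content is already packaged in Lemma~\ref{lem.4.4}. The one point demanding a little care is the telescoping estimate — one must replace $\frac{1}{G+i}$ by its uniform bound $\frac{1}{G+1}$ \emph{before} collapsing the sum, as the factor $(G+i)^{-1}$ would otherwise obstruct the telescoping; the replacement is wasteful but lands comfortably below $1$. With $0 < \eta < 1$ secured, Lemma~\ref{lem.4.5} follows, and it then feeds into Theorem~\ref{Th.GKL}: the uniform bound $h'_n = \mathcal{O}(\eta^n)$ forces $h_n$ to be constant up to $\mathcal{O}(\eta^n)$, the normalization $\int_0^1 H'_n\,\mathrm{d}x = 1$ identifies that constant as $\frac{1}{3\log G}$, and integrating~(\ref{4.6}) then yields~(\ref{4.3}).
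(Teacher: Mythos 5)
Your proposal is correct, and its skeleton coincides with the paper's: both treat Lemma~\ref{lem.4.5} as an immediate corollary of Lemma~\ref{lem.4.4}, so that the only substantive task is to verify $0<\eta<1$. Where you part ways is in that verification. The paper does it by numerical evaluation of the series, stating $\sum_{i=1,3,5,\ldots}\frac{1}{(G+i)i(i+2)}=0.150853\ldots$ and hence $\eta=4\cdot g\cdot 0.150853=0.372929\ldots<1$, with no further justification; you instead bound $\frac{1}{G+i}\leq\frac{1}{G+1}$ and collapse the telescoping sum $\sum_{i=1,3,\ldots}\frac{1}{i(i+2)}=\frac{1}{2}$, obtaining the closed-form estimate $\eta\leq\frac{2g}{G+1}=\frac{2}{G^{3}}=\frac{2}{2G+1}\approx 0.472<1$. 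Your bound is numerically weaker but fully rigorous and self-contained, whereas the paper's sharper constant rests on an unexplained numerical summation; since only the fact $\eta<1$ enters the statement and Theorem~\ref{Th.GKL}, your route loses nothing. You also make explicit two points the paper leaves tacit: the finite starting value $M_p<\infty$ supplied by the Remark after Lemma~\ref{GK.eq.}, and the induction $M_n\leq\eta^{\,n-p}M_p$ showing that the bound is uniform in $x$ (so the notation $\eta(x)$ in the statement is indeed superfluous, as you observe). Both additions are correct and strengthen the exposition.
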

\begin{proof}
\, Let $\eta$ be as in Lemma \ref{lem.4.4}.
Using this lemma, to show (\ref{4.33}) it is enough to prove that $\eta < 1$.
Calculating the sum of the series involved, we obtain
$
\eta= 4 \cdot g \cdot 0.150853 = 0.372929.
$
\end{proof}


\end{document}